\documentclass[a4paper,11pt,reqno, english]{amsart}  
\title{Counterexamples to the weak Hedetniemi Conjecture}
\author{Steven Raanes}
\let\Horig\H
\usepackage[utf8]{inputenc}
\usepackage[T1]{fontenc}
\usepackage{amsmath,amsthm}
\usepackage{amsfonts,amssymb,enumerate}
\usepackage{url,paralist}
\usepackage{mathtools}  
\usepackage[colorlinks=true,urlcolor=blue,linkcolor=red,citecolor=magenta]{hyperref}
\usepackage{enumerate}
\usepackage{tikz}
\usepackage[left=1in,right=1in,top=1in,bottom=1in]{geometry}
\usepackage{listings}
\usepackage[numbers]{natbib}

\theoremstyle{plain}
\newtheorem{thm}{Theorem}[section]
\newtheorem{lem}[thm]{Lemma}

\newtheorem*{thm*}{Theorem}

\theoremstyle{definition}
\newtheorem{defn}[thm]{Definition}

\newtheorem{rem}[thm]{Remark}

       \def\H{{\mathbb{H}}}      \def\N{{\mathbb{N}}}    \def\R{{\mathbb{R}}}        \def\Z{{\mathbb{Z}}}

\def\ep{{\varepsilon}}

\begin{document}
\begin{abstract}
    For any $n$, there exists finite simple graphs $G$ and $H$ such that $\chi(G),\chi(H) > n$ and $\chi(G \times H) = 4$.
\end{abstract}
\maketitle

\section{Introduction}

The tensor product $G \times H$ of graphs $G$ and $H$ is defined by $V(G \times H) = V(G) \times V(H)$ and $(g,h) \sim (g',h')$ if $g \sim g'$ and $h \sim h'$. The undirected Poljak-R\"odl function $f(n)$ is defined in \cite{poljak1981arc} by 
    $$f(n) = \text{min} \{ \chi(G \times H):\chi(G),\chi(H)\geq n \}$$

The famous Hedetniemi conjecture that $\chi(G \times H) = \text{min}\{ \chi(G),\chi(H)\}$ can be phrased as $f(n) = n$. The inequality $f(n) \leq n$ is simple, and El-Zahar and Sauer \cite{el1985chromatic} showed $f(n)= n$ for $n\leq 4$. In a breakthrough paper, Shitov \cite{shitov2019counterexamples} proved that $f(n) < n$ for $n$ large enough. Shitov's methods were then adapted by Tardif and Zhu \cite{tardif2019note} to show $f(n) \leq n - (\text{log}n)^{1/4 - o(1)}$. This result was improved by He and Wigderson \cite{he2021hedetniemi} to $f(n) \leq (1-10^{-9})n + o(n)$. Zhu \cite{zhu2020note} improved the bound to $f(n) \leq n/2 + o(n)$, and most recently Tardif \cite{tardif2023chromatic} sharped this to $f(n) \leq \lceil n/2\rceil + 3$ and showed that $f(5) = 4$. The conjecture that $f(n)$ is unbounded is called the weak Hedetneimi conjecture and was first asked in \cite{poljak1981arc}. We show this conjecture is false.

\begin{thm}
    For all $n \in \N$, $f(n) \leq 4.$
\end{thm}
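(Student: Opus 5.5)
We will follow Shitov's exponential-graph strategy, using the Poljak--R\"odl reduction to products with exponential graphs. For a graph $G$ and an integer $c$, let $K_c^G$ be the graph whose vertices are all maps $f\colon V(G)\to\{1,\dots,c\}$, with $f\sim f'$ iff $f(u)\neq f'(v)$ whenever $u\sim v$ in $G$. The evaluation map $(g,f)\mapsto f(g)$ is a proper $c$-colouring of $G\times K_c^G$. It therefore suffices to show that for every $n$ there is a finite graph $G$ with $\chi(G)>n$ and $\chi(K_4^G)>n$. Taking $H=K_4^G$ then gives $\chi(G\times H)\le 4$. Equality holds as well, since $f(n)\ge 4$ for $n\ge 4$ by El-Zahar--Sauer and monotonicity, but only the upper bound is needed.

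The plan is to choose $G$ so that $K_4^G$ contains many \emph{constant-like} colourings whose mutual adjacencies force high chromatic number. Concretely, we take $G = K_N \times Q$, a product of a large clique with an auxiliary graph $Q$ of large girth and large chromatic number (for example an Erd\H{o}s-type graph, or a generalized Mycielski or Kneser graph). Inside $K_4^G$ we then look at maps of the form $f(i,q)=\phi_q(i)$. Here each $\phi_q$ is a $4$-valued function on $[N]$ chosen from a family indexed by subsets, in the spirit of Shitov's \emph{Lemma on subsets}.

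The first key step is to show $\chi(G)>n$. This will follow because a proper colouring of $K_N\times Q$ with few colours restricts to a small-colouring structure that, as in Shitov's argument, forces a colouring of $Q$ or $K_N$. The second and main step is a lower bound on $\chi(K_4^G)$. For this we embed into $K_4^G$ a graph known to have large chromatic number, such as a Kneser graph $KG(m,k)$ or a shift graph. To each $k$-subset $A$ we assign a map $f_A$, built so that disjointness of $A$ and $B$ gives $f_A\sim f_B$: we require $f_A(u)\ne f_B(v)$ along every edge $uv$ of $G$, which we encode by letting colours $1,2$ mark membership in $A$ and colours $3,4$ mark non-membership, arranged along a bipartite double cover. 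Lov\'asz's bound $\chi(KG(m,k))=m-2k+2$ then gives $\chi(K_4^G)>n$ once $m$ is large.

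The main obstacle is reconciling the two requirements. The structure of $G$ has to be rich enough that $\chi(G)>n$, yet loose enough that only four colours suffice to realise every Kneser adjacency as a homomorphism-compatible pair. If the subset encoding fails, the fallback is Tardif's approach: show that $K_4^G$ retracts only onto small cliques, so that its core has chromatic number tending to infinity as $\chi(G)\to\infty$ for suitably sparse $G$. One would then verify this through a topological lower bound, namely the connectivity of the box complex of $K_4^G$.
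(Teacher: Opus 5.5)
Your reduction is correct: it suffices to find $G$ with $\chi(G)>n$ and $\chi(K_4^G)>n$. However, a homomorphism $KG(m,k)\to K_4^G$ is exactly a proper $4$-colouring of $G\times KG(m,k)$. So the step you call the main one is the whole theorem restated with $H=KG(m,k)$, and you never carry it out. The maps $f_A$ are not actually defined: ``colours $1,2$ mark membership, $3,4$ non-membership, arranged along a bipartite double cover'' does not specify a function on $V(K_N\times Q)$. The condition $f_A(u)\neq f_B(v)$ for every edge $uv$ of $G$ and all disjoint $A,B$ is where all the difficulty lies, and it is never checked. The fallback via retracts of $K_4^G$ and box-complex connectivity is likewise only named, not argued.

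Moreover, the ingredients you propose cannot work as stated.

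First, the factor $K_N$ is inert. Suppose $c$ is a proper $k$-colouring of $K_N\times X$ with $k<N$. Each fibre $\{(i,x): i\in[N]\}$ repeats a colour, and giving $x$ such a repeated colour properly colours $X$. Hence $\chi(K_N\times X)=\min(N,\chi(X))$ for every $X$. This does prove your first step. But it also shows that for $N\ge 5$ one has $\chi(K_N\times Q\times H)\le 4$ if and only if $\chi(Q\times H)\le 4$, so you are back to the original problem for $Q$.

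Second, Lov\'asz's bound for $KG(m,k)$ is an instance of the $\Z_2$ box-complex bound $\chi(X)\ge \mathrm{coind}\,\mathrm{Hom}(K_2,X)+2$, and this bound is multiplicative. The map $((A,B),(C,D))\mapsto(A\times C,B\times D)$ is a $\Z_2$-equivariant poset map $\mathrm{Hom}(K_2,Q)\times\mathrm{Hom}(K_2,H)\to\mathrm{Hom}(K_2,Q\times H)$. So the coindex for $Q\times H$ is at least the smaller of the two coindices, and $\chi(Q\times H)\ge\min(\chi(Q),\chi(H))$ whenever both factors attain the bound. This rules out Kneser or generalized Mycielski choices of $Q$. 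A large-girth random $Q$ escapes this obstruction, but then you have no mechanism at all for $4$-colouring $Q\times H$.

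What is missing is a lower bound for one factor that is not of $\Z_2$ type, together with an actual colouring construction. The paper supplies both topologically:
\begin{itemize}
\item It pairs the antipodal Borsuk graph $B_\nu(S^m,\delta)$ with the $\Z_p$-rotation Borsuk graph $B_\omega(S^m,\delta)$. The chromatic number of the latter is bounded below via $\mathrm{Hom}(C_p,\cdot)$ and Dold's theorem.
\item It $4$-colours the product by mapping it into $B_{\alpha_p}(W_p,\ep)$, which is a subgraph of a $4$-colourable Borsuk graph of $S^2$.
\item That map comes from a $\Z_{2p}$-equivariant map $S^m\times S^m\to W_p$, which exists because Borsuk--Ulam fails for the non-prime-power group $\Z_{2p}$ (Basu--Ghosh).
\end{itemize}
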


Since El-Zahar and Sauer showed $f(n) = n$ for $n \leq 4$ and $f(n)$ is monotone, this bound is tight and in fact completely determines $f(n)$ for all $n$.
 
Our constructions are topological, so we first recall the definitions and theorems we will need.

\section{Preliminaries}

An \textit{abstract simplicial complex} K on vertex set $V(K)$ is a set system $K \subset 2^{V(K)}$ such that $\{v \} \in K$ for all $v \in V(K)$, and $\sigma \in K$, $\tau \subset \sigma$ implies $\tau \in  K$. Elements of $K$ are called simplices. The dimension of a simplex $\sigma \in K$ is $|\sigma|-1$ and the dimension of $K$ is the maximum dimension of any simplex in $K$. Let $K$
and $L$ be simplicial complexes. A map $f:V(K) \to V(L)$ is a \textit{simplicial map} if $f(\sigma) = \{f(v): v \in \sigma\}$ is a simplex in $L$ for all $\sigma \in K$. 

Define the \textit{standard $n-$simplex} to be the convex hull of the standard basis vectors in $\R^{n+1}$. The \textit{geometric realization} of an abstract simplicial complex $K$ is the topological space $|K|$ obtained by identifying each simplex of $K$ with the standard simplex of the same dimension and gluing along common faces. We identify a simplical complex with its geometric realization when no confusion will arise. A simplical map $f: K \to L$ induces a continuous map between geometric realizations by extending $f: V(K) \to V(L)$ linearly on simplices. A \textit{polyhedron} is a topological space homeomorphic to the geometric realization of a finite simplicial complex. 

Define the \textit{order complex} $\Delta P$ of a poset $P$ to be the simplicial complex with vertices the elements of $P$ and simplices the chains in $P$. An order preserving map $f:P \to Q$ induces a simplical map $f_*:\Delta P \to \Delta Q$ by $f_*(p) = f(p)$. The \textit{face poset} $\mathcal{F}(K)$ of a simplical complex $K$ is the poset with elements the non-empty simplices of $K$ and $\tau \leq \sigma$ if $\tau \subset \sigma$. Note $|\Delta \mathcal{F}(K)|$ is homeomorphic to $K$.

Both our lower and upper bounds on chromatic number rely on equivariant topology. A more leisurely introduction to equivariant methods and their applications to various combinatorial and geometric problems can be found in \cite{matouvsek2003using} or \cite{kozlov2008combinatorial}. 

An action of $G$ on a space $X$ is a group homomorphism $\rho:G \to \text{Homeo}(X)$ and we call a space $X$ with a $G$ action a $G$ space. We will denote $\rho(g)(x)$ by $g\cdot x$ or $gx$. We say $G$ acts freely on $X$ if for all $x \in X$, $gx = x$ implies that $g=1$. An action of $G$ on a polyhedron $K$ is simplicial if $\rho(g)$ is a simplicial map for all $g \in G$. We say $G$ acts without fixed points on $X$ if for all $x \in X$ there exists some $g \in G$ such that $gx \neq x$. If $G$ acts on $X$ and $Y$, a map $f:X \to Y$ is equivariant, denoted $f:X \to_G Y$, if $f(g\cdot x)  = g\cdot f(x)$ for all $g \in G$ and $x \in X$. The composition of equivariant maps is equivariant.

Much of the usefulness of equivariant topology comes from non-existence results for equivariant maps, such as the following result known as Dold's theorem. Recall a space $X$ is $n-$connected if $\pi_i(X)$ is trivial for $0 \leq i \leq n$, where $\pi_i(X)$ is the $i$th homotopy group of $X$.

\begin{thm} 
    $\textnormal{(Dold's theorem \cite{dold1983simple})}$  Let $G$ be a non-trivial finite group, $K$ an $n-$connected simplical complex with a simplical $G$ action, and $L$ a simplical complex of dimension at most $n$ with free simplical $G$ action. Then there is no continuous equivariant map from $K$ to $L$.
\end{thm}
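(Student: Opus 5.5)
Because $\rho$ restricted to a subgroup is again an action and freeness passes to subgroups, I will first reduce to $G=\Z_p$ for a prime $p$. Since $G$ is nontrivial it contains an element of prime order $p$, generating a subgroup $\Z_p \le G$. The restricted $\Z_p$ action on $K$ is still simplicial, and the restricted action on $L$ is still free and simplicial. A $G$-equivariant map $K\to L$ is in particular $\Z_p$-equivariant. So it suffices to prove the statement for $G=\Z_p$. Suppose, for contradiction, that $f:K\to_{\Z_p} L$ exists. After passing to a subdivision, I may assume the actions are regular (if $gx$ and $x$ lie in a common simplex, then $g$ fixes that simplex pointwise). For $L$ this means the action permutes simplices freely.

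The first step is to map a standard free test space into $K$. Let $E_{n+1}$ be the $(n+1)$-dimensional free $\Z_p$-complex given by the join $\Z_p * \cdots * \Z_p$ of $n+2$ copies of the discrete space $\Z_p$. It is $n$-connected, and $\Z_p$ acts on it freely by diagonal translation. I construct an equivariant map $h:E_{n+1}\to K$ skeleton by skeleton:
\begin{itemize}
\item on vertices, choose an orbit representative, send it to any point of $K$, and extend equivariantly;
\item having defined $h$ on the $i$-skeleton with $i\le n$, pick one $(i+1)$-simplex from each (free) orbit; its boundary maps to a sphere $S^i$ in $K$, which is null-homotopic because $K$ is $n$-connected, so $h$ extends over that simplex;
\item extend to the rest of the orbit by equivariance, which is well defined since the orbit is free.
\end{itemize}
Freeness is needed only on $E_{n+1}$, not on $K$. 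Then $F=f\circ h:E_{n+1}\to_{\Z_p} L$ is an equivariant map from a free $(n+1)$-dimensional complex to a free complex of dimension $\le n$.

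The main step, and the main obstacle, is to rule out such an $F$; I will do this cohomologically with $\Z_p$ coefficients. Since both actions are free, $F$ descends to a map of quotients $\bar F: E_{n+1}/\Z_p \to L/\Z_p$. Both quotients carry classifying maps to $B\Z_p$: for $E_{n+1}$ this is the inclusion $c_E$ of the $(n+1)$-skeleton of the infinite lens space $E\Z_p/\Z_p$. Equivariant maps from a free $\Z_p$-complex into the contractible free space $E\Z_p$ are unique up to equivariant homotopy, so the classifying map $c_L$ of $L/\Z_p$ satisfies $c_L\circ \bar F \simeq c_E$. Let $w\in H^{n+1}(B\Z_p;\Z_p)$ be a generator; it is nonzero since $H^*(B\Z_p;\Z_p)$ is $\Z_p$ in every degree. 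Then:
\begin{itemize}
\item $c_E^*(w)\neq 0$, because the inclusion of the $(n+1)$-skeleton of a CW complex is injective on $H^{n+1}$ (the relative group $H^{n+1}(B\Z_p,\, E_{n+1}/\Z_p)$ vanishes, the pair having no relative cells in dimension $\le n+1$);
\item $c_E^*(w)=\bar F^*c_L^*(w)$ factors through $H^{n+1}(L/\Z_p;\Z_p)=0$, because $L/\Z_p$ is a CW complex of dimension at most $n$.
\end{itemize}
This contradiction shows no such $F$ exists, and hence no $f$.

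The delicate points are the identification $c_L\circ\bar F\simeq c_E$ and the injectivity in degree $n+1$. If one prefers to avoid classifying spaces, an alternative is a direct chain-level degree argument in the style of Dold's original paper. One computes that the equivariant chain map $F_\#$ sends the fundamental cycle of $E_{n+1}$ to a chain that must be nonzero modulo the norm element $1+g+\dots+g^{p-1}$. This is impossible because $C_{n+1}(L)=0$.
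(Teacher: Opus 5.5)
Your argument is correct. It necessarily differs from the paper, which gives no proof of this statement: Dold's theorem is quoted from \cite{dold1983simple} and used as a black box in Lemma~\ref{chibigforp}.

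You give the standard self-contained proof in three steps:
\begin{enumerate}
\item Pass to a subgroup $\Z_p$ via Cauchy's theorem.
\item Use only the $n$-connectivity of $K$, with no freeness on $K$, to build an equivariant map from the free, $n$-connected, $(n+1)$-dimensional join $E_{n+1}=\Z_p^{*(n+2)}$ into $K$.
\item Obstruct $E_{n+1}\to_{\Z_p} L$ by pulling a nonzero class of $H^{n+1}(B\Z_p;\Z_p)$ back through $L/\Z_p$, which vanishes in degree $n+1$.
\end{enumerate}
The citation buys brevity. Your route shows exactly where each hypothesis enters. Freeness of $L$ makes $L/\Z_p$ a CW complex of dimension $\le n$ carrying a classifying map, and the dimension bound kills $H^{n+1}$. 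This is the form in which the paper applies the theorem, with $K=S^m$ and $L=\Delta Hom(C_p,K_n)$.

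One justification needs a choice of model. You call $E_{n+1}/\Z_p$ the $(n+1)$-skeleton of ``the'' infinite lens space and infer injectivity on $H^{n+1}$ from ``no relative cells in dimension $\le n+1$.'' For odd $p$ this is not literally true of the standard lens space, nor of Milnor's infinite join $\Z_p^{*\infty}$, which has relative cells of every dimension (even vertices) outside $\Z_p^{*(n+2)}$. There are two easy fixes:
\begin{itemize}
\item Build $E\Z_p$ from $E_{n+1}$ by attaching free $\Z_p$-cells of dimension $\ge n+2$. This is possible since $E_{n+1}$ is $n$-connected, and then $E_{n+1}/\Z_p$ really is the $(n+1)$-skeleton.
\item Alternatively, note that $c_E$ is an $(n+1)$-connected map, because $E_{n+1}\to E\Z_p$ is. This gives injectivity on $H^{n+1}(-;\Z_p)$ directly.
\end{itemize}

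Your closing chain-level alternative is only a sketch. For $p>2$, $E_{n+1}$ is a wedge of several $(n+1)$-spheres, so it has no single fundamental cycle. That remark is not needed, since the cohomological argument already suffices.
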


 If $G$ acts freely on $X$ and $H$ act freely on $Y$, then $G \times H$ acts freely on $X \times Y$ by $(g,h)\cdot(x,y) = (gx,hy)$. Since we work with actions on product metric spaces in this paper, we need the following lemma.

\begin{lem}\label{metricproductaction}
    Let $X$ and $Y$ be metric spaces. Let $G$ act on $X$ by isometries and $H$ act on $Y$ by isometries. Then $G \times H$ acts by isometries on $X \times Y$ by $(g,h)\cdot(x,y) = (gx,hy)$ when $X \times Y$ is equipped with the max metric, $d((x_1,y_1),(x_2,y_2)) = \text{max} \{d(x_1,x_2),d(y_1,y_2)\}$.
\end{lem}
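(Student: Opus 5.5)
We need to verify two things: that the formula defines a group action of $G \times H$ on $X \times Y$, and that each element acts as an isometry of the max metric. Both are direct checks from the definitions, and I will do them in that order.

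For the action, I will check that $\rho(g,h)(x,y) = (gx,hy)$ is a homomorphism $G\times H \to \text{Homeo}(X\times Y)$. First, $(1,1)$ acts as the identity. Second, $(g_1,h_1)\cdot((g_2,h_2)\cdot(x,y)) = (g_1g_2x, h_1h_2y) = (g_1g_2,h_1h_2)\cdot(x,y)$, which is the multiplication in $G\times H$. Each $\rho(g,h)$ is the product map $\rho_X(g)\times\rho_Y(h)$ of two homeomorphisms, so it is continuous. Its inverse is $\rho(g^{-1},h^{-1})$, which is also continuous, so $\rho(g,h)$ is a homeomorphism. This uses the fact that the max metric induces the product topology, which is standard: max-metric balls are exactly products of balls.

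For the isometry property, fix $(g,h)$ and points $(x_1,y_1),(x_2,y_2)$. Since $G$ acts on $X$ by isometries, $d(gx_1,gx_2) = d(x_1,x_2)$, and similarly $d(hy_1,hy_2)=d(y_1,y_2)$. Hence
$$d((g,h)(x_1,y_1),(g,h)(x_2,y_2)) = \max\{d(gx_1,gx_2),d(hy_1,hy_2)\} = \max\{d(x_1,x_2),d(y_1,y_2)\},$$
which is $d((x_1,y_1),(x_2,y_2))$. Along the way I will also confirm that the max metric is a metric. Positivity and symmetry are immediate. The triangle inequality follows by bounding each coordinate distance by the sum of the corresponding coordinate distances, and then bounding each of those by the corresponding max.

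There is no real obstacle here. The only point needing a moment's care is that the max metric induces the product topology, so that "homeomorphism" in the definition of an action refers to the intended topology. I will dispose of this either by citing the standard fact or by noting that the balls of the max metric are products of balls.
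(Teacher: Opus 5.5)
Your proposal is correct, and its core is the same as the paper's proof: the computation $\max\{d(gx_1,gx_2),d(hy_1,hy_2)\}=\max\{d(x_1,x_2),d(y_1,y_2)\}$, which uses only that $G$ and $H$ act by isometries. The paper checks only this identity, so your other checks (the homomorphism property, that the max metric is a metric inducing the product topology, and that each $\rho(g,h)$ is a homeomorphism) are correct but routine extras.
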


\begin{proof}
    We verify $G \times H$ acts by isometries. We compute 
\begin{align*}
d((x_1,y_1),(x_2,y_2)) &= \text{max}\{d(x_1,x_2),d(y_1,y_2)\}\\
 &= \text{max}\{d(g x_1,g x_2),d(h y_1,h y_2)\}\\
 &= d(( gx_1,hy_1),(g x_2,hy_2))\\
&= d((g,h) \cdot (x_1,y_1), (g,h) \cdot (x_2,y_2))
\end{align*} 
\end{proof}

We now recall some facts about Hom complexes, which have been heavily used to study graph homomorphisms and  chromatic number \cite{babson2006complexes, babson2007proof, dochtermann2009hom}.

\begin{defn}
    Let $G$ and $H$ be graphs. A  multihomomorphism from $G$ to $H$ is a map $\eta:V(G) \to 2^{V(H)}\setminus \emptyset$ such that $v_1 \sim v_2$ in $
    G$ implies for all $w_1 \in \eta(v_1)$ and $w_2 \in \eta(v_2)$, $w_1 \sim w_2$. We define a partial order on the set of multihomomorphisms from $G$ to $H$ by $\eta_1 \leq \eta_2$ if $\eta_1(v) \subset\eta_2(v)$ for all $v \in V(G)$. Define $Hom(G,H)$ to be this poset of multihomomorphisms from $G$ to $H$.
\end{defn}

Let $\Z_n$ denote the integers modulo $n$, and let $p$ always denote a prime. Let $C_p$ be the cycle on $p$ vertices. We identify the vertex set of $C_p$ with $\Z_p$, where $i \sim j$ if $i \pm 1 = j \pmod{p}$. Then $\Z_p$ acts on $Hom(C_p, G)$ with generator $\omega$ by $[\omega\cdot \eta](i) = \eta(i+1)$. This action is simplicial on $\Delta Hom(C_p, G)$. If $G$ is without loops, this action is free. With this action, if $f: G \to H$ is a graph homomorphism, then $f_*:Hom(C_p, G) \to_{\Z_p} Hom(C_p, H)$ defined by $f_*(\eta)(i) = f(\eta(i))$ is an equivariant poset map, and so induces an equivariant map on the corresponding order complexes.

We will need two lemmas guaranteeing the existence of equivariant maps. The first gives a sufficient topological condition.

\begin{lem}\label{6.2.2}
    \textnormal{(Lemma 6.2.2 of \cite{matouvsek2003using})} Let $X$ be a $(n-1)$-connected $G$ space and $K$ a finite, free simplicial $G$ complex of dimension at most $n$. Then $K \to_G X$.
\end{lem}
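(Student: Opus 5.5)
The plan is the standard obstruction-theory argument: build the map skeleton by skeleton, one orbit of simplices at a time. Choose a set of representatives for the $G$-orbits of simplices. Freeness of the simplicial action means that if $g\sigma=\sigma$ then $g$ fixes the barycenter of $\sigma$, so $g=1$. Hence each orbit of a simplex $\sigma$ consists of $|G|$ distinct simplices $g\sigma$, and no nontrivial group element maps $\sigma$ to itself, even setwise. Finiteness of $K$ gives finitely many orbits in each dimension. We then define $f$ by induction on $k=0,1,\dots,n$ on the $k$-skeleton $K^{(k)}$, keeping $f$ equivariant and continuous at every stage.

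For $k=0$, pick one vertex $v$ from each orbit and an arbitrary point $x_v\in X$; this is possible since $X$ is $(n-1)$-connected, hence nonempty. Set $f(gv)=g x_v$. This is well defined because $gv=hv$ forces $g=h$ by freeness. For the inductive step, suppose $f$ is defined and equivariant on $K^{(k-1)}$ with $k\le n$. For each representative $k$-simplex $\sigma$, the restriction $f|_{\partial\sigma}$ is a map from a $(k-1)$-sphere into $X$. Since $k-1\le n-1$ and $X$ is $(n-1)$-connected, $\pi_{k-1}(X)=0$, so this map extends to a continuous map $f_\sigma:\sigma\to X$. Define $f$ on $g\sigma$ by $f(gy)=g f_\sigma(y)$ for $y\in\sigma$.

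The key check, and the only real obstacle, is well-definedness and continuity. First, the simplices $g\sigma$ for distinct $g$ are distinct, so there is no conflict in their interiors. Second, on the boundary we have $f(gy)=gf(y)$ already, by the inductive hypothesis, so the new definition agrees with the old one on $\partial(g\sigma)$. Since $|K|$ carries the weak (here, finite CW) topology, a map that is continuous on each closed simplex and agrees on overlaps is continuous. Equivariance holds by construction: for $y\in\sigma$ we have $f(h(gy))=f((hg)y)=hg f_\sigma(y)=h f(gy)$.

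After the $n$-th step, $K=K^{(n)}$ because $\dim K\le n$, so $f:K\to_G X$ is the desired equivariant map.
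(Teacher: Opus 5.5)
Your proposal is correct. The paper gives no proof of its own here and only cites Lemma 6.2.2 of Matou\v{s}ek; your orbit-by-orbit extension over skeleta is the standard argument for that result, with freeness giving trivial stabilizers of simplices and $(n-1)$-connectivity of $X$ letting you extend over each representative simplex.
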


The second uses the language of representations. Recall a representation of a group $G$ is a group homomorphism $G \to GL(V)$ for some vector space $V$. We will abuse notation and refer to the representation as $V$ when no confusion will arise. We will only consider $V = \R^d$ and equip $\R^d$ with its usual inner product and require all representations to be orthogonal, meaning  that $G \to O(d)$ where $O(d)$ is the orthogonal group in dimension $d$. Under these assumptions, a representation restricts to a $G$ action on the unit sphere. For a representation $V$ the unit sphere with the action induced by the representation is denoted $S(V)$ and called the representation sphere of $V$. For a subgroup $H$ of $G$, let $V^H$ denote the points in $V$ fixed by all elements of $H$. A representation is called fixed point free if $V^G = (0)$.

\begin{thm}\label{thm4.4}
    \textnormal{(Theorem 4.4 of \cite{BasuGhosh2017}}) Let $V$ be a fixed point free $\Z_n$ representation such that for each prime power $p^k$ dividing $n$, $V^{\Z_{p^k}} \neq (0)$. Then for any $r$ there exists a $\Z_n$ representation $U$  such that $dim(U) > r$ and $S(U) \to_{\Z_n} S(V)$.
\end{thm}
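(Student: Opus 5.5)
The plan is to reduce the statement to an existence problem for equivariant maps that is controlled by isotropy groups, and then to choose $U$ so that every isotropy group of $S(U)$ is a $p$-group. Write $n=\prod_{i} q_i$ with $q_i=p_i^{k_i}$, and let $P_i\le \Z_n$ be the unique subgroup of order $q_i$. By hypothesis $V^{P_i}\neq (0)$ for every $i$.

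\textbf{Step 1: choice of $U$.} Take $U=\bigoplus_i \mathbb{C}_{\chi_i}^{\oplus r}$, viewed as a real representation, where $\chi_i$ is a character of $\Z_n$ with kernel exactly $P_i$.
\begin{itemize}
\item A point of $S(U)$ has isotropy group equal to the intersection of the kernels $\ker\chi_i$ over those $i$ with a nonzero coordinate.
\item Every isotropy group of $S(U)$ is therefore contained in some $P_i$; in particular each is a $p$-group.
\item For every isotropy group $H\le P_i$ we have $V^H\supseteq V^{P_i}\neq(0)$, so $S(V)^H\neq\emptyset$ for all isotropy groups occurring in $S(U)$.
\end{itemize}
Clearly $\dim U=2r\cdot\#\{i\}$ exceeds any prescribed bound once $r$ is large.

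\textbf{Step 2: construction by obstruction theory.} Give $S(U)$ a $\Z_n$-CW structure, for instance via a fine equivariant triangulation, which refines the orbit-type stratification. Construct $f:S(U)\to_{\Z_n}S(V)$ cell orbit by cell orbit, processing orbit types from the largest isotropy group to the smallest. A cell with isotropy $H$ must map into $S(V)^H=S(V^H)$.
\begin{itemize}
\item The extension over a $d$-cell with isotropy $H$ is unobstructed if $S(V^H)$ is $(d-1)$-connected, i.e.\ if $\dim S(U)^H\le \dim V^H-1$.
\item When this dimension inequality fails, I would use the fact that $H$ is a $p$-group to replace the connectivity requirement by an equivariant mod-$p$ degree condition. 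This is the tom Dieck / Bartsch criterion: an equivariant map between representation spheres exists once the relevant degrees can be chosen consistently, and for $p$-groups the degrees on fixed sets only need to satisfy congruences mod $p$.
\end{itemize}

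\textbf{Step 3: a more explicit alternative.} One can try to write $f$ down directly. For $z\in\mathbb{C}_{\chi_i}$, the power map $z\mapsto z^{\ell}e$ is equivariant and vanishes only at $z=0$. Here $e$ spans an irreducible summand of $V^{P_i}$, and its character is $\chi_i^{\ell}$ for some $\ell$, since $\chi_i$ generates the dual of $\Z_n/P_i$. Summing such maps over coordinates gives an equivariant map $U\to V$, which one would then normalize to land in $S(V)$. The danger is cancellation: coordinates sent to the same line could sum to zero, and by Borsuk–Ulam-type constraints this cancellation genuinely occurs for $\mathbb{C}^r\to\mathbb{C}$ when $r\ge 2$. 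This explicit approach therefore cannot work naively, which is why I would rely on Step 2.

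\textbf{Main obstacle.} The hardest step is the one in Step 2 where the dimension inequality fails. There $\dim U^H$ grows with $r$ while $\dim V^H$ stays fixed, so the connectivity argument alone cannot succeed for large $r$. One needs the degree-theoretic extension criterion, which is available precisely because every isotropy group is a $p$-group; this is where the prime-power hypothesis is used. Concretely, I would first try to apply the Bartsch-type theorem for $p$-group isotropy to each stratum, and then glue the strata along the orbit-type filtration.
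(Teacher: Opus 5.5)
\textbf{There is a genuine gap, and it is already fatal at Step 1.} (The paper does not prove this statement; it only quotes it from Basu--Ghosh, so I am judging your plan on its own.) The representation $U$ you choose admits no $\Z_n$-map $S(U)\to S(V)$ once $r$ is large, so no obstruction theory in Step 2 can succeed.

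Any $\Z_n$-map $S(U)\to S(V)$ restricts to a map $S(U)^{P_i}\to S(V)^{P_i}=S(V^{P_i})$. This restriction is equivariant for the Weyl group $\Z_n/P_i$. With your $U$:
\begin{itemize}
\item $S(U)^{P_i}=S(\mathbb{C}_{\chi_i}^{\oplus r})$ is a $(2r-1)$-sphere on which $\Z_n/P_i$ acts freely.
\item $S(V^{P_i})$ is a fixed sphere with no $\Z_n/P_i$-fixed points, since $(V^{P_i})^{\Z_n/P_i}=V^{\Z_n}=0$.
\end{itemize}
When $n$ has exactly two prime divisors, $\Z_n/P_i$ has prime-power order. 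This includes $n=2p$, the only case the paper uses. The Borsuk--Ulam theorem for such groups (Dold's theorem when the order is prime) then forbids the map as soon as $2r>\dim V^{P_i}$.

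Concretely, take the paper's $V=\R_-\oplus\mathbb{C}$ and $P_i=\Z_p$. Your $\chi_i$ is then the sign character. So $S(U)^{\Z_p}=S^{2r-1}$ with the antipodal action would have to map $\Z_2$-equivariantly to $S(V^{\Z_p})=S^0$ with the swap. This is impossible for every $r\ge 1$, because $S^{2r-1}$ is connected.

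The underlying error is in Step 2. On cells of orbit type $\Z_n/H$, the extension problem is equivariant for the Weyl group $\Z_n/H$, not for $H$. So the fact that all isotropy groups are $p$-groups buys you nothing. The degree and congruence theory you invoke (tom Dieck, Bartsch) concerns maps whose fixed-point spheres have matching dimensions. Moreover, for groups of prime-power order the Borsuk--Ulam property holds. So no mod-$p$ degree condition can compensate for $\dim S(U)^H>\dim S(V^H)$; the dimension inequality failing is a genuine obstruction, not a limitation of the connectivity method.

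\textbf{What is needed instead.} Choose $U$ with small fixed sets at all nontrivial subgroups, for instance a free $U=\mathbb{C}_\zeta^{\oplus m}$ with $\zeta$ a faithful character. Then all the excess dimension sits on free cells. There the hypothesis enters through the fact that $\Z_n$ is not of prime-power order:
\begin{itemize}
\item each Sylow subgroup $P_i$ admits a constant map to a point of $S(V^{P_i})$, so the obstruction restricts to zero on every $P_i$;
\item a transfer argument, together with $\gcd_i\,[\Z_n:P_i]=1$, then kills the obstruction itself (\"Ozaydin's argument).
\end{itemize}
That settles the primary obstruction. Controlling all the higher obstructions, which depend on earlier choices, is the real content of the cited theorem, and your plan does not address it.
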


We use these two lemmas to show there exists a $\Z_{2p}$ space $W_p$ such that all finite, free $\Z_{2p}$ complexes equivariantly map to $W_p$.

\begin{lem}\label{allmaptoZ}
     Let $\Z_{2p}$ act on $\R^3$ by the action generated by $$\alpha_p = \begin{pmatrix}
-1 & 0 & 0 \\ 
 0 & cos\left[\frac{(p-1)\pi}{p}\right] & -sin\left[ \frac{(p-1)\pi}{p} \right] \\
 0 & sin\left[\frac{(p-1)\pi}{p}\right] & cos \left[\frac{(p-1)\pi}{p}\right] \\
\end{pmatrix}.$$ 
Let $W_p$ denote the representation sphere of this action, equipped with the angular metric. Then, all finite free $\Z_{2p}$ complexes map equivariantly to $W_p$.
\end{lem}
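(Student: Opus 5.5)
Given a finite free $\Z_{2p}$ complex $K$ of dimension $d$, I will route the map through a large representation sphere: $K \to_{\Z_{2p}} S(U) \to_{\Z_{2p}} S(V) = W_p$. Here $V=\R^3$ carries the action generated by $\alpha_p$, and $U$ is supplied by Theorem \ref{thm4.4}. The first arrow then comes from Lemma \ref{6.2.2}, and composing equivariant maps gives the claim. The angular metric on $W_p$ plays no role in the existence statement; it only fixes the topology, which is the usual one on $S^2$.

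\textbf{Step 1: check that $V$ satisfies the hypotheses of Theorem \ref{thm4.4} with $n=2p$.}

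\emph{Fixed point free.} The first coordinate is negated by $\alpha_p$. The lower $2\times 2$ block is a rotation by $\theta=(p-1)\pi/p$. Since $\theta \in (0,\pi)$ for $p\ge 2$, this block has no fixed nonzero vector. Hence $V^{\Z_{2p}}=(0)$.

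\emph{Order of $\alpha_p$.} We have $\alpha_p^{k}=\mathrm{diag}(( -1)^k, R_{k\theta})$. Then $\alpha_p^{2p}$ is the identity because $2p\theta = 2(p-1)\pi$. So the action is a genuine $\Z_{2p}$ representation.

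\emph{Prime-power subgroups.} Assume first that $p$ is odd, so the prime powers dividing $2p$ are $2$ and $p$.
\begin{itemize}
\item The subgroup $\Z_2$ is generated by $\alpha_p^{p}=\mathrm{diag}(-1,R_{(p-1)\pi})$. Since $p-1$ is even, $R_{(p-1)\pi}$ is the identity. This element therefore fixes the $yz$-plane, so $V^{\Z_2}\neq(0)$.
\item The subgroup $\Z_p$ is generated by $\alpha_p^{2}=\mathrm{diag}(1,R_{2\theta})$. This element fixes the $x$-axis, so $V^{\Z_p}\neq(0)$.
\end{itemize}
For $p=2$ the group is $\Z_4$, and the prime powers dividing $4$ are $2$ and $4$. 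The condition for $\Z_4$ itself would be $V^{\Z_4}\neq (0)$, which contradicts fixed point freeness. So $p=2$ appears to be excluded, and I expect the paper only uses odd $p$. I would either restrict to odd $p$ or handle $p=2$ separately.

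\textbf{Step 2: produce a highly connected source sphere.} Let $d=\dim K$. Apply Theorem \ref{thm4.4} with $r=d+1$. This gives a $\Z_{2p}$ representation $U$ with $\dim U > d+1$ and an equivariant map $S(U)\to_{\Z_{2p}} W_p$. Then $S(U)$ is a sphere of dimension at least $d+1$, so it is $d$-connected, and in particular $(d-1)$-connected.

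\textbf{Step 3: map $K$ into the sphere and compose.} Since $K$ is a finite free simplicial $\Z_{2p}$ complex of dimension at most $d$, Lemma \ref{6.2.2} applied with $X=S(U)$ yields $K\to_{\Z_{2p}} S(U)$. Composing with the map from Step 2 gives $K \to_{\Z_{2p}} W_p$.

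\textbf{Main obstacle.} The only delicate point is the verification in Step 1, specifically the prime-power condition and the parity of $p$. Everything else is a direct application of the quoted results.
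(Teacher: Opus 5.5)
Your proposal is correct and matches the paper's proof. You check the fixed-point hypotheses of Theorem~\ref{thm4.4} (the $\Z_2$ subgroup fixes the points $(0,x_1,x_2)$ and the $\Z_p$ subgroup fixes the points $(x_1,0,0)$), obtain $S(U)\to_{\Z_{2p}} W_p$ with $S(U)$ sufficiently connected, and precompose with $K\to_{\Z_{2p}} S(U)$ from Lemma~\ref{6.2.2}. Your parity caveat is also justified: the paper's claim about the $\Z_2$ subgroup holds only for odd $p$, and for $p=2$ the lemma genuinely fails, since a mod-$2$ Fadell--Husseini index computation rules out any $\Z_4$-map from the free $\Z_4$-sphere $S^3$ to $W_2$. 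So restricting to odd $p$, rather than ``handling $p=2$ separately'', is the right fix, and it costs nothing because the paper only uses large $p$.
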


\begin{proof}
    Let $K$ be a finite, free $\Z_{2p}$ complex of dimension $k$. Since the $\Z_2$ subgroup of $\Z_{2p}$ fixes all points of the form $(0,x_1,x_2)$ and the $\Z_p$ subgroup fixes all points of the form $(x_1,0,0)$, by \ref{thm4.4}  $S(U) \to_{\Z_{2p}} W_p$ where $S(U)$ is a representation sphere of dimension at least $k$. By \ref{6.2.2}, $K \to_{\Z_{2p}} S(U)$ and so composition gives $K \to_{\Z_{2p}} W_p$ as desired.
\end{proof}

\section{Construction of Counterexamples}

\begin{defn}
    Let $\Z_n$ act on $X$ by isometries with $\omega$ generating the action. Define $B_\omega(X,\ep)$ to be the graph with vertices the points of $X$, and $x_1 \sim x_2$ if $d(x_1, \omega^{-1}x_2) < \ep$ or $d(x_2, \omega^{-1}x_1) < \ep$.
\end{defn}

For $\Z_2$ acting on the sphere with the usual antipodal action, we recover the usual Borsuk graphs. Other generalizations of Borsuk graphs are defined and explored in \cite{martinez2024generalized, daneshpajouh2020dold}.

These generalized Borsuk graphs are functorial in the following way:

\begin{lem}\label{functor}
    Let $f: X \to_{\Z_n} Y$ satisfy that $d(x_1,x_2)< \delta \implies d(f(x_1),f(x_2))< \ep$ . Then $f_*: B_\omega(X, \delta) \to B_\omega(Y, \ep)$, defined by $f_*(x) = f(x)$, is a graph homomorphism.

    In particular, if $f$ is uniformly continuous, then for any $\ep > 0$ there is a $\delta > 0$ such that $f_*: B_\omega(X, \delta) \to B_\omega(Y, \ep)$ is a homomorphism.
\end{lem}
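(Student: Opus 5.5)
The plan is to unwind the definition of the edge relation in $B_\omega$ and use equivariance to move the group element through $f$. Take an edge $x_1 \sim x_2$ in $B_\omega(X,\delta)$. By definition, either $d(x_1,\omega^{-1}x_2) < \delta$ or $d(x_2,\omega^{-1}x_1) < \delta$, and the two cases are symmetric, so it suffices to treat the first. Applying the hypothesis on $f$ to the pair of points $x_1$ and $\omega^{-1}x_2$ gives $d(f(x_1), f(\omega^{-1}x_2)) < \ep$. Equivariance of $f$ (applied to the group element $\omega^{-1}$, which is fine since $f$ commutes with the whole $\Z_n$ action, not just the generator) gives $f(\omega^{-1}x_2) = \omega^{-1} f(x_2)$. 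Hence $d(f(x_1), \omega^{-1}f(x_2)) < \ep$, which is exactly the condition for $f(x_1) \sim f(x_2)$ in $B_\omega(Y,\ep)$. So $f_*$ maps edges to edges and is a graph homomorphism.

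One small point to address is whether we need $f(x_1) \neq f(x_2)$, i.e. whether loops could arise. Since a graph homomorphism only requires adjacency to be preserved, I would note that if $B_\omega(Y,\ep)$ has a loop at a point then the adjacency is still witnessed by the defining inequality. So there is nothing extra to check beyond the inequality; the definition of $B_\omega$ is purely metric.

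For the ``in particular'' statement, I would recall the definition of uniform continuity: for every $\ep>0$ there is $\delta>0$ such that $d(x_1,x_2)<\delta$ implies $d(f(x_1),f(x_2))<\ep$. This is precisely the hypothesis of the first part, so the first part applies with this $\delta$.

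There is no real obstacle here; the only step needing care is applying the hypothesis to the translated point $\omega^{-1}x_2$ rather than $x_2$, and then invoking equivariance for $\omega^{-1}$.
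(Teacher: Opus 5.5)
Your proof is correct and follows the paper's argument essentially line for line. You reduce to the case $d(x_1,\omega^{-1}x_2)<\delta$, apply the hypothesis to the pair $x_1,\omega^{-1}x_2$, use equivariance to rewrite $f(\omega^{-1}x_2)=\omega^{-1}f(x_2)$, and then read off the uniform-continuity clause directly from the first part.
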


\begin{proof}
    Let $x_1 \sim x_2$ in $B_\omega(X, \delta)$. By potentially switching the names of $x_1$ and $x_2$ we may assume $d(x_1, \omega^{-1}x_2) < \delta$. This implies $d(f(x_1), f(\omega^{-1}x_2)) < \ep$. By equivariance, $f(\omega^{-1}x_2) = \omega^{-1}f(x_2)$, so $d(f(x_1), \omega^{-1}f(x_2)) = d(f(x_1), f(\omega^{-1}x_2)) < \ep$ and $f(x_1) \sim f(x_2)$ in $B_\omega(Y, \ep)$.
\end{proof}

For $p$ prime, we will lower bound the chromatic number of generalized Borsuk graphs using Hom complexes, for which we need the following lemma.

\begin{lem}\label{KtoHomK}
    Let $K$ be a polyhedron on which $\Z_p$ acts by isometries such that $K$ admits an equivariant triangulation. Let the action be generated by $\omega$.
    Then for every $\ep > 0$, $K \to_{\Z_p} \Delta Hom(C_p, B_\omega(K, \ep))$.
\end{lem}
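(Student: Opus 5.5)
Fix an equivariant triangulation of $K$ and subdivide it (barycentric subdivisions remain equivariant) until every closed simplex has diameter less than $\ep/2$. Call the resulting simplicial complex $T$; $\Z_p$ still acts simplicially on it. The plan is to construct an equivariant order-preserving map $\Phi:\mathcal{F}(T) \to Hom(C_p, B_\omega(K,\ep))$. This gives an equivariant simplicial map $\Delta\mathcal{F}(T) \to \Delta Hom(C_p, B_\omega(K,\ep))$. Composing with the equivariant homeomorphism $K \cong |T| \cong |\Delta \mathcal{F}(T)|$ then proves the claim; the barycentric homeomorphism commutes with simplicial actions.

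For a simplex $\sigma$ of $T$, I will regard it as a set of vertices, which are points of $K$. Define
$$\Phi(\sigma)(i) = \omega^{-i}\sigma = \{\omega^{-i}v : v \in \sigma\}, \qquad i \in \Z_p.$$
Inclusion is clearly preserved: if $\tau \subset \sigma$ then $\Phi(\tau)(i) \subset \Phi(\sigma)(i)$ for all $i$. The sign convention has to be matched to the action $[\omega\cdot\eta](i) = \eta(i+1)$. We have
$$\Phi(\omega\sigma)(i) = \omega^{-i}\omega\sigma = \omega^{-(i-1)}\sigma = \Phi(\sigma)(i-1),$$
so this choice makes $\Phi$ equivariant for $\omega^{-1}$, i.e.\ it intertwines the action by $\omega$ on $T$ with the inverse of the Hom action. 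I will fix this either by using $\omega^{i}$ instead of $\omega^{-i}$ and rechecking adjacency, or by precomposing with the automorphism $\omega \mapsto \omega^{-1}$ of $\Z_p$. The second option is harmless because reflection $i \mapsto -i$ is an automorphism of $C_p$, which gives an equivariant isomorphism of Hom posets between the two actions.

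The main step is to check that $\Phi(\sigma)$ is a multihomomorphism. Take $i$ and $i+1$, with $a = \omega^{-i}u$ and $b = \omega^{-i-1}v$ for $u,v\in\sigma$. Then
$$d(b, \omega^{-1}a) = d(\omega^{-i-1}v, \omega^{-i-1}u) = d(v,u) < \ep/2 < \ep,$$
since $\omega$ acts by isometries and $\sigma$ has small diameter. So $a \sim b$ by the definition of $B_\omega(K,\ep)$. That definition is symmetric in the two orderings, so both neighbours of $i$ in $C_p$ are handled, including the wrap-around from $p-1$ to $0$, because $\omega^{-p}=1$.

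The one subtlety is loops: the graph must not force $a=b$ to be adjacent to itself in a problematic way. Hom complexes only require $w_1\sim w_2$, and with the $d<\ep$ definition these adjacencies hold, so no issue arises. Thus the genuine obstacle is only the bookkeeping of the direction of the $\Z_p$ action, plus making sure the equivariant subdivision has small mesh. The latter holds because iterated barycentric subdivision of a finite complex shrinks mesh to zero, and compactness lets Euclidean mesh control the metric on $K$.
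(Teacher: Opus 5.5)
Your proposal is correct and is essentially the paper's proof. The paper works on the face poset of a fine equivariant triangulation and defines $\phi(\sigma)=f_\sigma$ with $f_\sigma(i)=\omega^i\sigma$, which is exactly what either of your sign fixes produces: composing your $\Phi$ with the reflection $i\mapsto -i$ gives $i\mapsto\omega^{i}\sigma$, and the adjacency check $d(\omega^i u,\omega^{-1}\omega^{i+1}v)=d(u,v)<\ep$ goes through directly; your mesh argument via barycentric subdivision and uniform continuity only spells out what the paper compresses into ``take a fine enough equivariant triangulation.''
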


\begin{proof}
    Take a fine enough equivariant triangulation of $K$ such that every simplex has diameter less than $\ep$. By abuse of notation, we will also call this simplicial complex $K$. We now define a poset map $\phi: \mathcal{F} (K) \to Hom(C_p, B_\omega(K, \ep))$ by $\phi(\sigma) = f_\sigma$, where $f_\sigma(i) = \omega^i\sigma$. Note that $\omega^i\sigma$ is a set of vertices in $B_\omega(K, \ep)$.

    We first verify that $f_\sigma$ is a multihomomorphism for all $\sigma$. Let $\sigma = \{k_1,k_2,...,k_n\}$. We want to show for any $i \in \Z_p$, $v \in f_\sigma(i)$, $w \in f_\sigma(i+1)$, we have $v \sim w$. There is some $k_j \in \sigma$ such that $v = \omega^i k_j$, and similarly $w = \omega^{i+1}k_\ell$.  Then $d(\omega^i k_j, \omega^{-1}\omega^{i+1}k_\ell) = d(\omega^i k_j, \omega^{i}k_\ell) = d(k_j, k_\ell)$ where the third equality holds as $\Z_p$ acts on $K$ by isometries. Since all simplices in $K$ have diameter less than $\ep$ and $k_j, k_\ell$ are in the same simplex, we have $d(k_j, k_\ell) < \ep$, so $d(\omega^i k_j, \omega^{-1}\omega^{i+1}k_\ell) < \ep$ and $v \sim w$. Thus $f_\sigma$ is a multihomomorphism.

    We now verify that $\phi$ is equivariant. Since $\omega$ generates the action, it suffices to show that $\omega \circ \phi = \phi \circ \omega$. We compute $\phi(\omega \sigma)(i)  =f_{\omega \sigma}(i) = \omega^i (\omega \sigma) = \omega^{i+1}\sigma = f_\sigma(i+1) = [\omega\cdot f_\sigma](i) = \omega(\phi(\sigma))(i)$, so $\phi \circ \omega = \omega \circ \phi$. 

    We now check that $\phi$ is order preserving. Let $\tau \subset \sigma$. Then $f\tau(i) = \omega^i\tau \subset \omega^i\sigma = f_\sigma(i)$, so $f_\tau \leq f_\sigma$.

    The map $\phi_*: \Delta \mathcal{F}(K) \to \Delta Hom(C_p, B_\omega(K, \ep))$ is then the desired equivariant map.
\end{proof}

We can now show that for carefully picked $\Z_p$ spaces $K$, the chromatic number of generalized Borsuk graphs can be arbitrarily large, generalizing the result that the usual Borsuk graphs on $S^n$ have chromatic number at least $n+2$.

\begin{lem}\label{chibigforp}
    Recall that for any $p$, $\Z_p$ acts freely on odd dimensional spheres by rotation, and this action is isometric with respect to the usual angular distance on $S^m$. Let this action be generated by $\omega$.

    For any $p$ and any $n$, there exists odd $m$ such that $\chi(B_\omega(S^m,\ep))>n$ for all $\ep>0$.
\end{lem}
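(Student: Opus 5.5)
The plan is to argue by contradiction using Dold's theorem. The bridge between colourings and topology is Lemma~\ref{KtoHomK} together with functoriality of Hom complexes. Fix $p$ and $n$, and choose an odd $m$ with $m \geq n$, say $m = 2n+1$; we will see that this suffices. The sphere $S^m$ with the free rotation action generated by $\omega$ is $(m-1)$-connected. It is a polyhedron carrying an isometric $\Z_p$ action, and it admits an equivariant triangulation: for instance, take the join of $n'$ copies of a $p$-gon triangulation of $S^1$ on which $\Z_p$ acts by rotation, followed by equivariant subdivision. So Lemma~\ref{KtoHomK} gives, for every $\ep>0$, an equivariant map $S^m \to_{\Z_p} \Delta Hom(C_p, B_\omega(S^m,\ep))$.

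Now suppose $c$ is a proper colouring of $B_\omega(S^m,\ep)$ with $n$ colours, that is, a graph homomorphism $c: B_\omega(S^m,\ep) \to K_n$. First we check that $B_\omega(S^m,\ep)$ has no loops for small $\ep$. A loop at $x$ would require $d(x,\omega^{-1}x)<\ep$, and since the action is free on a compact space, the displacement $\min_x d(x,\omega^{-1}x)$ is positive. For large $\ep$ the graph has loops and the chromatic number is infinite, so that case is trivial, and we may assume $\ep$ is below this threshold. By functoriality, $c$ induces an equivariant poset map $c_*: Hom(C_p, B_\omega(S^m,\ep)) \to Hom(C_p, K_n)$. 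Composing gives a continuous equivariant map
$$S^m \to_{\Z_p} \Delta Hom(C_p, K_n).$$
Since $K_n$ is loopless, the $\Z_p$ action on $\Delta Hom(C_p,K_n)$ is free, and it is simplicial.

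It remains to bound the dimension of $\Delta Hom(C_p,K_n)$ and apply Dold's theorem. A chain in $Hom(C_p,K_n)$ strictly increases $\sum_i |\eta(i)|$ at each step. This sum lies between $p$ and $p(n-1)$, because adjacent positions must receive disjoint nonempty sets, so $|\eta(i)| \leq n-1$. Hence $\dim \Delta Hom(C_p,K_n) \leq p(n-1)-p = p(n-2)$. We therefore choose $m$ odd with $m-1 \geq p(n-2)$, for example $m = pn+1$ adjusted to be odd. Then $S^m$ is $(m-1)$-connected and the target has dimension at most $m-1$, so Dold's theorem forbids the equivariant map. This contradiction shows $\chi(B_\omega(S^m,\ep))>n$. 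Note that the bound depends only on $p$ and $n$, not on $\ep$, as the statement requires. (Instead of the crude dimension bound, one could use the known connectivity of Hom complexes of cycles into complete graphs, but the crude bound suffices here.)

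The main obstacle is the hypothesis of Lemma~\ref{KtoHomK} that $S^m$ admits an equivariant triangulation compatible with the isometric action, with simplices of arbitrarily small diameter. I would handle it as follows. Use the join triangulation of $S^1 * \cdots * S^1$ with $\Z_p$ rotating each $p$-gon, which realises the diagonal rotation action. Then apply iterated barycentric subdivision, which stays equivariant and drives mesh to zero, since simplex diameters shrink under subdivision in the induced geometric realisation. Finally, transport this to the angular metric via the standard equivariant homeomorphism to the round sphere; because that homeomorphism is uniformly continuous, mesh going to zero is preserved.
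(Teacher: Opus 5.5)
Your proof is correct and follows the paper's argument. You assume an $n$-colouring, combine Lemma~\ref{KtoHomK} with functoriality to get $S^m \to_{\Z_p} \Delta Hom(C_p,K_n)$, and contradict Dold's theorem by taking $m-1$ at least the dimension of the target. Your explicit bound $\dim \Delta Hom(C_p,K_n)\le p(n-2)$, the freeness check, and the equivariant-triangulation discussion just fill in details the paper leaves implicit.

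The one blemish is your opening claim that $m=2n+1$ suffices. Your argument does not support it, since for $p\ge 3$ and $n$ large we have $p(n-2)>2n$. Discard it in favour of your later choice of odd $m$ with $m-1\ge p(n-2)$.
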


\begin{proof}
    Let $p$ be any prime and $n$ be arbitrary. Let $k$ be the dimension of $Hom(C_p, K_n)$ and choose $m$ to be any odd integer strictly greater than $k$. Suppose for contradiction that $\chi((B_\omega(S^m,\ep))\leq n$ for some $\ep > 0$. Then there is a graph homomorphism $f:B_\omega(S^m,\ep) \to K_n $. By functoriality of the Hom complex, we get a map $f_*:Hom(C_p, B_\omega(S^m,\ep)) \to_{\Z_p} Hom(C_p, K_n)$. By \ref{KtoHomK}, we also have a map $\phi: S^m \to_{\Z_p} Hom(C_p, B_\omega(S^m,\ep))$. Composing, we get a map $f_*\circ \phi: S^m \to_{\Z_p} Hom(C_p, K_n)$. Since $S^m$ is $m-1\geq k-$connected and $Hom(C_p, K_n)$ is $k$ dimensional, this contradicts Dold's Theorem.
\end{proof}

While better lower bounds for $\chi(B_\omega(S^m,\ep))$ can be achieved with more sophisticated equivariant topology, the above suffices for our purposes.

We now prove a lemma relating the product of specific generalized Borsuk graphs to the generalized Borsuk graph of the product.

\begin{lem}\label{productlemma}
    Let $\Z_2$ act on $X$ by isometries with the action generated by $\nu$. Let $\Z_p$ act on $Y$ by isometries with the action generated by $\omega$. Endow $X \times Y$ with the product metric and product action as in \ref{metricproductaction}, and identify $\Z_2 \times \Z_p$ with $\Z_{2p}$ such that $(\nu,\omega)$ generates the $\Z_{2p}$ action. Then for all $\ep >0$, we have  
    $$\phi: B_\nu(X, \ep) \times B_\omega(Y, \ep) \to B_{(\nu,\omega)}(X \times Y, \ep) $$

    where $\phi(x,y) = (x,y)$ is a graph homomorphism.
\end{lem}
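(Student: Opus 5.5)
The plan is to verify the edge condition directly from the definitions, using that $\nu$ is an involution to match up the directions in which the two adjacencies hold. Since $\phi$ is the identity on $X \times Y$, which is the vertex set of both graphs, it is enough to show that every edge of the tensor product is an edge of $B_{(\nu,\omega)}(X \times Y, \ep)$.

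Let $(x_1,y_1) \sim (x_2,y_2)$ in $B_\nu(X, \ep) \times B_\omega(Y, \ep)$. By the definition of the tensor product, $x_1 \sim x_2$ in $B_\nu(X,\ep)$ and $y_1 \sim y_2$ in $B_\omega(Y,\ep)$. The adjacency $y_1 \sim y_2$ holds in at least one direction. After possibly swapping the names of the two vertices $(x_1,y_1)$ and $(x_2,y_2)$, which changes neither hypothesis, we may assume $d(y_1, \omega^{-1} y_2) < \ep$.

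The one point needing care is that the adjacency $x_1 \sim x_2$ might a priori hold only in the opposite direction, namely $d(x_2, \nu^{-1}x_1) < \ep$. This is where we use that the first factor is $\Z_2$. Since $\nu^{-1} = \nu$ and $\nu$ is an isometry,
\begin{align*}
d(x_2,\nu^{-1}x_1) = d(x_2, \nu x_1) = d(\nu x_2, \nu\nu x_1) = d(\nu^{-1}x_2, x_1) = d(x_1,\nu^{-1}x_2).
\end{align*}
So the two conditions defining adjacency in $B_\nu(X,\ep)$ coincide, and in every case $d(x_1, \nu^{-1}x_2) < \ep$.

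Now $(\nu,\omega)^{-1} = (\nu^{-1},\omega^{-1})$. Using the max metric of Lemma \ref{metricproductaction},
\begin{align*}
d\big((x_1,y_1), (\nu,\omega)^{-1}(x_2,y_2)\big) = \max\{d(x_1,\nu^{-1}x_2),\, d(y_1,\omega^{-1}y_2)\} < \ep.
\end{align*}
Hence $(x_1,y_1) \sim (x_2,y_2)$ in $B_{(\nu,\omega)}(X\times Y,\ep)$. Since the action on $X \times Y$ is generated by $(\nu,\omega)$, this is exactly the adjacency defining $B_{(\nu,\omega)}(X \times Y, \ep)$.

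The only potential obstacle is this mismatch of directions between the two factors. The symmetry of the $\Z_2$ relation removes it, and the rest is routine.
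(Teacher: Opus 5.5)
Your proof is correct and follows the paper's argument: you reduce to $d(x_1,\nu^{-1}x_2)<\ep$ and $d(y_1,\omega^{-1}y_2)<\ep$, then apply the max metric. The paper packs that reduction into a single ``we may assume''; your explicit check that the two adjacency conditions in $B_\nu(X,\ep)$ coincide, because $\nu=\nu^{-1}$ is an isometry, is exactly what justifies that step.
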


\begin{proof}
    Suppose $(x_1,y_1) \sim (x_2, y_2)$ in $B_\nu(X, \ep) \times B_\omega(Y, \ep)$. Then  we may assume  $d(x_1,\nu x_2) < \ep$ and $d(y_1,\omega^{-1}y_2) < \ep$, so
    
\begin{align*}
        d((x_1,y_1),(\nu,\omega)^{-1} \cdot (x_2,y_2)) &= d((x_1,y_1),(\nu^{-1}x_2,\omega^{-1}y_2))\\ &= \text{max} \{d(x_1,\nu x_2), d(y_1,\omega^{-1}y_2)\}\\
        &< \ep.
\end{align*}
 
    Thus $(x_1,y_1) \sim (x_2, y_2)$ in $B_{(\nu,\omega)}(X \times Y, \ep)$.
\end{proof}

Lastly, we need a universal upper bound on the chromatic number of generalized Borsuk graphs of free $\Z_{2p}$ spaces. Since all free $\Z_{2p}$ polyhedra map to $W_p$, as a first step we upper bound the chromatic number of $B_{\alpha_p}(W_p, \ep)$.

\begin{lem}\label{Zfinchi}
    Let $W_p$ be as in \ref{allmaptoZ}. For $p$ large enough, there exists $\ep>0$ such that $\chi(B_{\alpha_p}(W_p,\ep)) \leq 4$.
\end{lem}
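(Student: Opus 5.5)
The plan is to write down an explicit coloring. For a closed set $A\subset W_p$, a coloring whose classes are closed sets $A_1,\dots,A_4$ covering $W_p$ is proper in $B_{\alpha_p}(W_p,\ep)$ as soon as $\ep\le\min_j d(A_j,\alpha_p^{-1}A_j)$. Indeed, if $x_1,x_2\in A_j$ are adjacent, say $d(x_1,\alpha_p^{-1}x_2)<\ep$, then $\alpha_p^{-1}x_2\in\alpha_p^{-1}A_j$, a contradiction; the other case is symmetric. Since $\alpha_p$ is an isometry of the angular metric, $d(A_j,\alpha_p^{-1}A_j)=d(\alpha_p A_j,A_j)$. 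By compactness of $W_p$, this distance is positive whenever $A_j\cap\alpha_pA_j=\emptyset$. So it suffices to cover $W_p$ by four closed sets $A_j$ with $A_j\cap\alpha_pA_j=\emptyset$, and then take $\ep$ to be the minimum of these four positive distances.

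To build the cover, use cylindrical coordinates on $W_p$: height $h=x_1$ and angle $\varphi$ of $(x_2,x_3)$, where $\varphi$ is defined away from the poles $h=\pm1$. In these coordinates $\alpha_p(h,\varphi)=(-h,\varphi+\theta)$ with $\theta=(p-1)\pi/p$. On the equator $\alpha_p$ acts as a rotation of odd order $p$, so the equator alone already needs three colours. The fourth colour will handle one cap. Fix a small $\delta\in(0,1)$ and set $A_4=\{h\ge\delta\}$. Then $\alpha_pA_4=\{h\le-\delta\}$, which is disjoint from $A_4$.

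For the other three colours, choose three closed arcs $I_1,I_2,I_3$ of the circle $\R/2\pi\Z$ that cover it, each of length $L$ slightly larger than $2\pi/3$. Set
$$A_j=\{h\le\delta,\ \varphi\in I_j\}\cup\{(-1,0,0)\},\qquad j=1,2,3.$$
Adding the south pole makes each $A_j$ closed. These sets, together with $A_4$, cover $W_p$.

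The main step is to check $A_j\cap\alpha_pA_j=\emptyset$ for $j\le 3$. We have $\alpha_pA_j\subset\{h\ge-\delta,\ \varphi\in I_j+\theta\}\cup\{(1,0,0)\}$. The north pole does not lie in $A_j$, and the south pole does not lie in $\alpha_pA_j$. Hence any common point would be a non-polar point with $\varphi\in I_j\cap(I_j+\theta)$. For a closed arc of length $L$, this intersection is empty exactly when $L<\theta$ and $L<2\pi-\theta$, i.e.\ when $L<\pi-\pi/p$. We can choose such an $L$ with $2\pi/3<L<\pi-\pi/p$ once $\pi/p<\pi/3$, that is for $p\ge5$. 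This is where ``$p$ large enough'' is used.

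The only delicate points are the closedness of the $A_j$ near the south pole, where $\varphi$ is undefined, and remembering that $\alpha_p$ swaps the two hemispheres. Both are handled by the choice of $A_4$ and by adding the pole as above. There is no genuine obstacle beyond this bookkeeping. One could also check that $\alpha_p$ has no fixed points on $W_p$, so $B_{\alpha_p}(W_p,\ep)$ has no loops for small $\ep$, but this already follows from the disjointness above.
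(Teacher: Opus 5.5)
Your proof is correct, but it takes a different route from the paper. The paper argues by perturbation. It takes as known that the ordinary Borsuk graph $B_\nu(S^2,\ep)$ is $4$-colourable for small $\ep$. It then notes that $\alpha_p^{-1}\to\nu$ uniformly on $S^2$ and chooses $p$ with $d(\alpha_p^{-1}x,\nu x)<\ep/2$ for all $x$. By the triangle inequality, $B_{\alpha_p}(W_p,\ep/2)$ is then a subgraph of $B_\nu(S^2,\ep)$.

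You instead build the colouring directly, using the principle that underlies the standard Borsuk colouring. A closed cover by sets $A_j$ with $A_j\cap\alpha_pA_j=\emptyset$ gives a proper colouring once $\ep$ is at most the minimum of the positive distances $d(A_j,\alpha_p^{-1}A_j)$. You then exploit the cylindrical form $\alpha_p(h,\varphi)=(-h,\varphi+\theta)$. One cap $\{h\ge\delta\}$ is sent to the opposite cap, and the rest is cut into three angular sectors. This works because $I\cap(I+\theta)=\emptyset$ for closed arcs of length $L<\pi-\pi/p$, and $2\pi/3\le L<\pi-\pi/p$ can be chosen when $p\ge5$. Your bookkeeping is also right: closedness at the south pole, the exchange of the poles, the covering property, and the exclusion of loops.

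What your version buys:
\begin{itemize}
\item an explicit threshold, namely every prime $p\ge 5$ rather than an unspecified ``large enough'';
\item an $\ep$ that is computable in principle;
\item an argument that does not lean on the ``well known'' $4$-colouring of Borsuk graphs on $S^2$.
\end{itemize}
It also sidesteps the small mismatch in the paper's last line, which proves the bound for $\ep/2$ but states it for $\ep$; this is harmless since only existence of some $\ep$ is claimed.

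The paper's argument is shorter given the Borsuk fact. It also shows a more robust phenomenon: any isometric action whose generator is uniformly close enough to the antipodal map inherits the $4$-colouring for a suitable $\ep$. The cost is that it gives no quantitative information about $p$.
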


\begin{proof}

It is well known that we may take $\ep>0$ small enough such that $\chi(B_\nu(S^2,\ep)) = 4$ where $\nu$ is the standard involution. Since $\alpha_p^{-1} \to \nu$ pointwise and $\alpha_p^{-1}$ and $\nu$ are linear, we have  $\alpha_p^{-1} \to \nu$ uniformly on $S^2$ as $p \to \infty$. Thus we may take $p$ such that $d( \alpha_p^{-1} x, \nu x) < \ep/2$ for all $x \in S^2$. We show that with this $p$, $B_{\alpha_p}(W_p, \ep/2)$ is a subgraph of $B_\nu(S^2, \ep)$. Let $x_1 \sim x_2$ in $B_{\alpha_p}(S^2, \ep/2)$, so we may assume $d(x_1, \alpha_p^{-1}x_2) < \ep/2$. Thus $d(x_1,\nu x_2) \leq d(x_1, \alpha_p^{-1}x_2) + d(\alpha_p^{-1}x_2, \nu x_2) < \ep/2 + \ep/2 = \ep$, so $x_1 \sim x_2$ in $B(S^2, \ep)$. Thus, $\chi(B_{\alpha_p}(W_p,\ep)) \leq 4$.

\end{proof}

We now prove our main result.

\begin{thm}
   For all $n$, there exists graphs $G$ and $H$ such that $\chi(G),\chi(H) > n$ and $\chi(G \times H) \leq 4$. 
\end{thm}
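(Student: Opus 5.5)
Fix $n$. Using Lemma \ref{Zfinchi}, choose a prime $p$ large enough, together with some $\ep_0>0$, so that $\chi(B_{\alpha_p}(W_p,\ep_0))\le 4$. The two factors will be $G = B_\nu(S^m,\delta)$ and $H = B_\omega(S^{m'},\delta)$. Here $\nu$ is the antipodal map on $S^m$ and $\omega$ is the free rotation $\Z_p$-action on an odd sphere $S^{m'}$. The dimensions are chosen via Lemma \ref{chibigforp}: apply it with the prime $2$ to get $m$ (on odd spheres the rotation action for $p=2$ is the antipodal map), and with the prime $p$ to get $m'$. This makes $\chi(G),\chi(H)>n$ for every $\delta>0$. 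Since the lemma gives this for all $\ep>0$, we keep the freedom to choose $\delta$ as small as we like. These graphs are infinite, so the finiteness of the statement is handled at the end.

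Next, build an equivariant map into $W_p$. Equip $X\times Y=S^m\times S^{m'}$ with the max metric and the product action generated by $(\nu,\omega)$, which is free and isometric by Lemma \ref{metricproductaction}. It is a $\Z_{2p}$ action because $\Z_2\times\Z_p\cong\Z_{2p}$ for $p$ odd. The space $X\times Y$ is a polyhedron admitting a free equivariant triangulation: take a product of equivariant triangulations, or a triangulation of the quotient lens space lifted. Then Lemma \ref{allmaptoZ} gives an equivariant map $g: X\times Y\to_{\Z_{2p}} W_p$.

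We must arrange that the generator $(\nu,\omega)$ is sent to $\alpha_p$. Both generate $\Z_{2p}$, so we may identify the groups so that $(\nu,\omega)\mapsto\alpha_p$. Concretely, if needed, replace $\omega$ on $S^{m'}$ by a suitable power, which is still a free rotation generating $\Z_p$; Lemma \ref{chibigforp} applies equally to it. The map $g$ is continuous on a compact space, hence uniformly continuous, so Lemma \ref{functor} supplies $\delta>0$ such that $g_*:B_{(\nu,\omega)}(X\times Y,\delta)\to B_{\alpha_p}(W_p,\ep_0)$ is a homomorphism. Lemma \ref{productlemma} gives a homomorphism $G\times H\to B_{(\nu,\omega)}(X\times Y,\delta)$. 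Composing with the 4-coloring of $B_{\alpha_p}(W_p,\ep_0)$ yields $\chi(G\times H)\le 4$.

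It remains to pass to finite graphs. By de Bruijn--Erdős compactness, $\chi(G)>n$ is witnessed by a finite subgraph $G_0\subseteq G$, and similarly $H_0\subseteq H$ (assuming choice). Then $G_0\times H_0\subseteq G\times H$ is still 4-colorable, which gives the finite counterexamples required for $f(n)\le 4$.

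The main obstacle I expect is the bookkeeping of generators and metrics rather than any deep step. Two checks are needed. First, the generator identification between $(\nu,\omega)$ and $\alpha_p$ must be consistent with how Lemma \ref{allmaptoZ} produced the map. Second, Lemma \ref{productlemma} uses the condition $d(x_1,\nu x_2)<\ep$, which matches $\nu^{-1}=\nu$ for an involution. I would verify both carefully, and also confirm that Lemma \ref{allmaptoZ} applies to the polyhedron $X\times Y$ through a fine free equivariant triangulation.
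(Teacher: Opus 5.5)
Your proposal is correct and matches the paper's proof: fix $p$ and $\ep_0$ via Lemma~\ref{Zfinchi}, obtain the sphere dimensions from Lemma~\ref{chibigforp} (for the primes $2$ and $p$), map $S^m\times S^{m'}$ equivariantly to $W_p$ by Lemma~\ref{allmaptoZ}, choose $\delta$ by uniform continuity and Lemma~\ref{functor}, and precompose with the homomorphism of Lemma~\ref{productlemma}. The differences are cosmetic: you allow two dimensions $m,m'$ where the paper uses one common odd $m$; you make the identification $(\nu,\omega)\leftrightarrow\alpha_p$ explicit, and no replacement of $\omega$ by a power is needed, since you simply declare $(\nu,\omega)$ to be the generator acting by $\alpha_p$; and you include the passage to finite subgraphs that the paper leaves to its closing remark.
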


\begin{proof}
    Fix $p$ and $\ep$ as in \ref{Zfinchi}. It is then enough to show that for all $n$, there exists graphs $G$ and $H$ such that $\chi(G),\chi(H) > n$ and $G \times H \to B_{\alpha_p}(W_p,\ep)$. By \ref{chibigforp}, for our choice of $p$ and any $n$ we can take $m$ odd large enough that $\chi(B_\nu(S^m, \ep)), \chi(B_\omega(S^m, \ep)) > n$ for all $\ep > 0$ where $\nu$ is the usual antipodal action. By \ref{allmaptoZ}, there is some equivariant map $f: S^m \times S^m \to_{\Z_{2p}} W_p$. Since $S^m \times S^m$ is compact, $f$ is uniformly continuous, so by \ref{functor} there is some $\delta > 0$ such that $f_*:B_{(\nu,\omega)}(S^m\times S^m, \delta) \to B_{\alpha_p}(W_p, \ep)$ is a graph homomorphism. By \ref{productlemma}, we get a homomorphism $\phi: B_{\nu}(S^m, \delta) \times B_{\omega}(S^m, \delta) \to B_{(\nu,\omega)}(S^m\times S^m, \delta)$. Composing, we get $f \circ \phi: B_{\nu}(S^m, \delta) \times B_{\omega}(S^m, \delta) \to B_{\alpha_p}(W_p,\ep)$, completing the proof.
\end{proof}

\begin{rem}
    By the De Bruijn–Erd\Horig{o}s theorem \cite{bruijn1951colour}, one may take finite subgraphs $F_1 \subset B_\nu(S^m, \ep)$ and $F_2 \subset B_\omega(S^m, \ep)$ with $\chi(F_1) = \chi(B_\nu(S^m, \ep))$ and $\chi(F_2) = \chi(B_\omega(S^m, \ep))$ to get finite graphs showing that $f(n) \leq 4$.
\end{rem}

\section*{Acknowledgments}
We would like to thank Florian Frick and Matthew Kahle for comments on earlier drafts that greatly improved this manuscript. We also thank Tim Edwards for many thoughtful discussions.

\bibliographystyle{abbrv}
\bibliography{references}

\end{document}